\newtheorem{theorem}{Theorem}
\newtheorem{corollary}[theorem]{Corollary}
\newtheorem{lemma}[theorem]{Lemma}
\newtheorem{example}[theorem]{Example}
\journal{Linear Algebra and its Applications}
\begin{document}

\begin{frontmatter}



\title{A Note on the Bias and Kemeny's Constant\\ in Markov Reward Processes with an Application to Markov Chain Perturbation}


\author{Ronald Ortner}

 \affiliation{organization={Montanuniversit{ae}t Leoben},
             addressline={Franz-Joseph-Strasse 18},
             city={Leoben},
             postcode={8700},
             country={Austria}}

\begin{abstract}
Given a unichain Markov reward process (MRP), we provide an explicit expression for the bias values in terms of mean first passage times. This result implies a generalization of known Markov chain perturbation bounds for the stationary distribution to the case where the perturbed chain is not irreducible. It further yields an improved perturbation bound in 1-norm. As a special case, Kemeny's constant can be interpreted as the translated bias in an MRP with constant reward 1, which offers an intuitive explanation why it is a constant.
\end{abstract}



\begin{keyword}
Markov reward process \sep
Markov chain \sep 
bias \sep
perturbation theory \sep
stationary distribution \sep
mean first passage times \sep
Kemeny's constant 


\MSC[2020] 60J10, 90C40

\end{keyword}

\end{frontmatter}



\section{Preliminaries}
 
 \subsection{Markov reward processes}
 
Consider a Markov chain over a finite state space $S$ with states $1,2,\ldots,N$ and transition probabilities $p_{ij}$ ($1\leq i,j\leq N$). We assume in what follows that the Markov chain is unichain, that is, it consists of a single recurrent class and a possibly empty set of transient states. Equipping the Markov chain with a reward function $r:S\to \mathbb{R}$ yields a Markov reward process (MRP), cf.\ Section 8.2 of \cite{puterman} for the following facts. Usually, it is assumed that the reward\footnote{For functions $f:S\to \mathbb{R}$ we write in the following short $f_i$ instead of $f(i)$.} $r_i$ in each state $i$ is the mean of some fixed reward distribution. The \textit{average reward} $\rho$ in the MRP is then defined as 
\[
    \rho = \lim_{T\to\infty} \frac{1}{T} \sum_{t=1}^T \mathbb{E}[r_{S_t}|S_1=i],
\]
where $S_t$ is a random variable for the state at step $t$. In the assumed case of a unichain MRP 
the value $\rho$ is independent of the initial state $i$. In fact, $\rho$ can be written in terms of the stationary distribution~$\mu$ as
\[
    \rho = \sum_{i=1}^N \mu_i\, r_i,
\]
noting that a unchain Markov chain has a unqiue stationary distribution~$\mu$ with $\mu_i=0$ for transient states $i$. 

 \subsection{The bias}
While $\rho$ is the average reward in the limit, the actual collected rewards will differ depending on the initial state. This is made precise by the notion of \textit{bias}, which for each state $i$ is defined as 
\begin{equation}\label{eq:bias}
 \lambda_i = \mathbb{E}\bigg[\sum_{t=1}^\infty \big(r_{S_t}-\rho\big)\,\Big|\, S_1=i \bigg]
\end{equation}
in MRPs with underlying aperiodic chain, while in general one sets
\begin{equation}\label{eq:bias-per}
 \lambda_i = \lim_{T\to \infty}\frac{1}{T}\sum_{\tau=1}^T \mathbb{E}\bigg[\sum_{t=1}^\tau \big(r_{S_t}-\rho\big)\,\Big|\, S_1=i \bigg].
\end{equation}
By definition, the difference of two bias values $\lambda_i-\lambda_j$ quantifies the advantage in accumulated reward when starting in state $i$ over starting in state $j$.

\begin{example}\label{ex}
Assume that all states $i$ have the same mean reward $r_i=r$. 
Then the average reward $\rho=r$ and $\rho$ is attained from the first step, independent of the initial state. Accordingly, the difference of any two bias values $\lambda_i-\lambda_j$ has to be 0. 
 Indeed, the bias is 0 for all states.
\end{example}

The bias values of an MRP are a solution of the \textit{Poisson equation}, that is, for all $i$,
\begin{equation}\label{eq:pe}
 \rho + \lambda_i = r_i + \sum_{j=1}^N p_{ij}\, \lambda_j.
\end{equation}
The bias values $\lambda_i$ in addition satisfy $\sum_i \mu_i \lambda_i = 0$, which can be achieved for any solution of \eqref{eq:pe} by adding a suitable vector with identical entries. On the other hand, any respective translation $\lambda'_i:=\lambda_i + c$ of the bias values $\lambda_i$ still fulfills the Poisson equation. 

 \subsection{Mean first passage times}
The \textit{mean passage time} $\tau_{ij}$ ($i\neq j$) is defined as the expected time it takes to first visit $j$ when starting in $i$. Further, the \textit{mean return time} $\tau_{ii}$ is the first time $i$ is revisited again when starting in~$i$. It is well-known~\cite{keme} that in irreducible Markov chains, for $1\leq i,j\leq N$
\begin{eqnarray}
 \tau_{ij} &=& 1 + \sum_{k\neq j} p_{ik}\, \tau_{kj}, \mbox{ and }\label{eq:mpt}  \\
 \tau_{ii} &=& \frac{1}{\mu_i}\label{eq:mrt}.
\end{eqnarray}

In unichain Markov chains $\tau_{ij}$ can be infinite for transient states $j$. However, all $\tau_{ij}$ are finite for recurrent states $j$ and for these \eqref{eq:mpt} and \eqref{eq:mrt} still hold.

\section{Main result}

Our main result gives an explicit expression for the bias values in terms of the mean first passage times of an MRP. 

\begin{theorem}\label{thm}
 The values 
 \[
     \lambda'_i := -\sum_{j\neq i} \mu_j\, r_j\, \tau_{ij}
 \]
 satisfy the Poisson equation \eqref{eq:pe}. 
\end{theorem}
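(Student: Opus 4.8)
The plan is to verify the Poisson equation \eqref{eq:pe} directly, by substituting the proposed values into its right-hand side and simplifying with the mean-first-passage-time identities \eqref{eq:mpt} and \eqref{eq:mrt}. First I would write out
\[
 r_i + \sum_{k=1}^N p_{ik}\,\lambda'_k = r_i - \sum_{k=1}^N p_{ik}\sum_{j\neq k}\mu_j\, r_j\,\tau_{kj}
\]
and interchange the order of summation, collecting the coefficient of each fixed product $\mu_j r_j$. Since $\mu_j=0$ for every transient state $j$, only recurrent indices $j$ contribute, so that all passage times $\tau_{kj}$ occurring here are finite and the rearrangement is legitimate.

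For each such recurrent $j$ the inner sum over $k$ is $\sum_{k\neq j}p_{ik}\,\tau_{kj}$, which by the recurrence \eqref{eq:mpt} equals $\tau_{ij}-1$. Substituting this yields
\[
 r_i + \sum_{k=1}^N p_{ik}\,\lambda'_k = r_i - \sum_{j}\mu_j\, r_j\,(\tau_{ij}-1) = r_i - \sum_{j}\mu_j\, r_j\,\tau_{ij} + \sum_j \mu_j\, r_j,
\]
where the sums are effectively over recurrent $j$. The last term is exactly $\rho=\sum_j \mu_j r_j$. It then remains to split the middle sum into its diagonal part $j=i$ and its off-diagonal part: the off-diagonal part reproduces $+\lambda'_i$, while by the return-time identity \eqref{eq:mrt} the diagonal contribution is $\mu_i r_i\,\tau_{ii}=\mu_i r_i/\mu_i=r_i$, cancelling the leading $r_i$. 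What survives is precisely $\rho+\lambda'_i$, which is \eqref{eq:pe}.

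The step I expect to demand the most care is the diagonal term, and more generally the role of transient states. The cancellation of the leading $r_i$ rests on the identity $\mu_i\,\tau_{ii}=1$, which \eqref{eq:mrt} supplies \emph{only} for recurrent $i$; for a transient state $i$ one has $\mu_i=0$ while $\tau_{ii}$ is infinite, so the diagonal term does not enter in the same manner and the argument must be examined separately. The cleanest route is therefore to carry the computation through first for recurrent $i$ (which in particular settles the irreducible case completely), and then to treat the transient states on their own, being careful with the convention $\mu_j\,\tau_{ij}=0$ whenever $\mu_j=0$ and with the possible infiniteness of passage times to transient targets.
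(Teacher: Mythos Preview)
Your approach is essentially the paper's: both substitute $\lambda'$ into the right-hand side of \eqref{eq:pe} and reduce via \eqref{eq:mpt} and \eqref{eq:mrt}, with your version slightly more streamlined in that you swap the sums and apply the recurrence directly to $\sum_{k\neq j}p_{ik}\tau_{kj}=\tau_{ij}-1$, whereas the paper first adds and subtracts the diagonal term and then carries the extra pieces through. Your caution about transient $i$ is well placed---the paper's own computation tacitly uses $\mu_i\tau_{ii}=1$ in the same way and does not treat the transient case separately.
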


\begin{proof}
Inserting the defined values $\lambda'_i$ in the right hand side of the the Poisson equation \eqref{eq:pe}, we obtain, using \eqref{eq:mpt} and \eqref{eq:mrt},
\begin{eqnarray*} 
 r_i + \sum_j p_{ij}\, \lambda'_j  &=& r_i - \sum_j p_{ij} \sum_{k\neq j} \mu_k\, r_k\, \tau_{jk}  \\
   &=& r_i - \sum_j p_{ij} \sum_{k} \mu_k\, r_k\, \tau_{jk} + \sum_j p_{ij}\, \mu_j\, r_j\, \tau_{jj}\\
   &=& r_i - \sum_{k} \mu_k\, r_k\, \sum_j p_{ij}\, \tau_{jk} + \sum_j p_{ij}\, r_j \\
   &=& r_i - \sum_{k} \mu_k\, r_k \big( \tau_{ik} -1 + p_{ik}\,\tau_{kk}  \big) + \sum_j p_{ij}\, r_j 
\end{eqnarray*}
\begin{eqnarray*} 
   &=& r_i - \sum_{k} \mu_k\, r_k \tau_{ik} + \sum_{k} \mu_k\, r_k - \sum_{k} \mu_k\, r_k\, p_{ik}\,\tau_{kk}   + \sum_j p_{ij}\, r_j \\
&=& r_i + \lambda'_i - \mu_i\, r_i\, \tau_{ii} + \sum_{k} \mu_k\, r_k - \sum_{k} r_k\, p_{ik} + \sum_j p_{ij}\, r_j \\
    &=& \lambda'_i + \rho, 
\end{eqnarray*}

which concludes the proof. 
\end{proof}

In order to obtain the actual bias values $\lambda_i$ from the $\lambda'_i$ defined in Theorem~\ref{thm}, these have to be translated, cf.\ the remark after \eqref{eq:pe}.

\section{Implications}

While Theorem \ref{thm} is quite simple, it has some interesting implications discussed in the following.

\subsection{Bias span}
A known connection between the bias and transition times is the following. We define the \textit{diameter} $D:= \max_{i\neq j}\tau_{ij}$ to be the maximal mean first passage time between two states. 
Then for rewards bounded in $[0,1]$ the \textit{bias span} ${\rm span}(\lambda)$ is upper bounded as
\begin{equation}\label{eq:bsd}
 {\rm span}(\lambda) \,:= \max_i \lambda_i - \min_i \lambda_i \,\leq\, D.
\end{equation}
This observation has been made in the more general context of Markov decision processes (MDPs), see \cite{jaorau}. Theorem \ref{thm} makes the connection between bias and transition times precise. Note that \eqref{eq:bsd} is a straightforward consequence of Theorem \ref{thm}.

\subsection{Markov chain perturbation}
Let us consider a Markov chain with transition matrix $P=(p_{ij})_{i,j=1}^N$ and a perturbed chain with transition matrix $\tilde{P}=(\tilde{p}_{ij})_{i,j=1}^N$.
Perturbation bounds for the stationary distribution provide inequalities of the form
\[
     \|\mu-\tilde{\mu}\|_p  \,\leq\,  \kappa \cdot \|P-\tilde{P}\|_q   
\]
for so-called \textit{condition numbers} $\kappa$ (i.e., parameters of the unperturbed chain),
most commonly for $p=1,\infty$ and $q=\infty$, cf.\ \cite{chome2} for an overview. The condition numbers of the following two bounds involve mean first passage times and are closely related to the bias values of Theorem \ref{thm}.

\begin{theorem}[Cho\,\&\,Meyer\,\cite{chome}]\label{thm:chome}
Let $P$, $\tilde{P}$ be the transition matrices of two irreducible Markov chains with stationary distributions $\mu$ and $\tilde{\mu}$. Then
\[
   |\mu_i-\tilde{\mu}_i|  \,\leq\,  \frac{\mu_i}{2} \cdot \max_{j\neq i} \tau_{ji} \cdot \|P-\tilde{P}\|_\infty .  
\]
\end{theorem}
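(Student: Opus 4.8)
The plan is to derive the bound directly from Theorem~\ref{thm} by choosing the reward function cleverly. Fix the coordinate $i$ of interest and consider the MRP on the unperturbed chain $P$ with the indicator reward $r_j = 1$ if $j = i$ and $r_j = 0$ otherwise. Then the average reward is $\rho = \sum_j \mu_j\, r_j = \mu_i$, and the explicit formula of Theorem~\ref{thm} collapses to $\lambda'_i = 0$ and $\lambda'_l = -\mu_i\,\tau_{li}$ for $l\neq i$, since the only surviving term in the defining sum is the one for $j = i$. In particular, all values $\lambda'_l$ are nonpositive, their maximum $0$ is attained at $l=i$, and hence their span is ${\rm span}(\lambda') = \mu_i\,\max_{l\neq i}\tau_{li} = \mu_i\,\max_{j\neq i}\tau_{ji}$, which is precisely the quantity appearing in the asserted bound.

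Next I would establish the perturbation identity $\tilde\mu_i - \mu_i = \tilde\mu\,(\tilde P - P)\,\lambda'$, reading $\lambda'$ as a column vector. This follows by writing the Poisson equation \eqref{eq:pe} in vector form as $(I-P)\lambda' = r - \rho\,\mathbf 1$ with the above reward, multiplying on the left by the perturbed stationary row vector $\tilde\mu$, and using $\tilde\mu(I-\tilde P) = 0$ together with $\tilde\mu\,\mathbf 1 = 1$. The left-hand side becomes $\tilde\mu(r - \rho\,\mathbf 1) = \tilde\mu_i - \mu_i$, while on the right one splits $I-P = (I-\tilde P) + (\tilde P - P)$, so that the term $\tilde\mu(I-\tilde P)\lambda'$ vanishes and only $\tilde\mu(\tilde P - P)\lambda'$ survives.

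The decisive step is then to bound $\tilde\mu(\tilde P - P)\lambda' = \sum_k \tilde\mu_k \sum_l(\tilde p_{kl} - p_{kl})\,\lambda'_l$ and to extract the factor $\tfrac12$. Here I would exploit that each row of $\tilde P - P$ sums to zero, so that for every $k$ and every constant $c_k$ the values $\lambda'_l$ may be replaced by the centered values $\lambda'_l - c_k$ without changing the inner sum. Choosing $c_k$ to be the midpoint $\tfrac12(\max_l\lambda'_l + \min_l\lambda'_l)$ bounds each centered value in absolute value by $\tfrac12\,{\rm span}(\lambda')$, whence $\big|\sum_l(\tilde p_{kl}-p_{kl})\lambda'_l\big| \le \tfrac12\,{\rm span}(\lambda')\sum_l|\tilde p_{kl}-p_{kl}| \le \tfrac12\,{\rm span}(\lambda')\,\|P-\tilde P\|_\infty$. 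Summing against $\tilde\mu_k \ge 0$ with $\sum_k\tilde\mu_k = 1$ and inserting the value of ${\rm span}(\lambda')$ from the first step yields exactly $|\mu_i - \tilde\mu_i| \le \tfrac{\mu_i}{2}\max_{j\neq i}\tau_{ji}\,\|P-\tilde P\|_\infty$.

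I expect the main obstacle to be conceptual rather than computational: recognizing that the indicator reward turns the bias of Theorem~\ref{thm} into (a translate of) the relevant column of the fundamental matrix, so that the column span governing Cho and Meyer's condition number is nothing but a bias span. The centering argument producing the factor $\tfrac12$ is routine once the zero-row-sum structure of $\tilde P - P$ is isolated; the only point requiring care is to keep $\lambda'$ consistently as an object of the \emph{unperturbed} chain while letting $\tilde\mu$ carry the entire perturbation.
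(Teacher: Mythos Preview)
Your proof is correct, and the first half---the indicator reward, the computation of $\lambda'$ via Theorem~\ref{thm}, and the identification ${\rm span}(\lambda')=\mu_i\max_{j\neq i}\tau_{ji}$---is exactly what the paper does. The difference lies in the second half: the paper does not derive the perturbation identity $\tilde\mu_i-\mu_i=\tilde\mu(\tilde P-P)\lambda'$ directly, but instead invokes Theorem~\ref{thm:bsp}, the general average-reward perturbation bound $|\rho-\tilde\rho|\le\tfrac12\,{\rm span}(\lambda)\,\|P-\tilde P\|_\infty$, which is proved separately in the appendix by a probabilistic (martingale/Azuma--Hoeffding) argument. Your route is more elementary and entirely algebraic; in effect you have extracted the deterministic core of the proof of Theorem~\ref{thm:bsp} (the same centering trick appears there as~\eqref{eq:bb} and~\eqref{eq:a7}) and specialized it to the indicator reward in one stroke. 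What the paper's detour through Theorem~\ref{thm:bsp} buys is the extension to perturbed chains~$\tilde P$ that need not be irreducible, since $\tilde\rho$ is then defined as an initial-state-dependent Ces\`aro limit rather than via a stationary~$\tilde\mu$; your identity $\tilde\mu(I-\tilde P)=0$ presupposes a stationary distribution, which is of course guaranteed under the irreducibility hypothesis of Theorem~\ref{thm:chome} as stated.
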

The condition number of the following bound uses Kemeny's constant $\eta$, defined as
\[
    \eta \,:=\, \eta_i \,:=\, \sum_{j\neq i} \mu_j\,\tau_{ij}.
\]
It can be shown that $\eta_i$ is indeed independent of $i$ (cf.\ also next section below).
Note that $\eta_i$ coincides with $\lambda'_i$ when all rewards are 1.

\begin{theorem}[Hunter\,\cite{hunter2}]\label{thm:hunter}
Let $P$, $\tilde{P}$ be the transition matrices of two irreducible Markov chains with stationary distributions $\mu$ and $\tilde{\mu}$. Then
\[
   \|\mu-\tilde{\mu}\|_1  \,\leq\,  \frac{\eta}{2} \cdot \|P-\tilde{P}\|_\infty .  
\]
\end{theorem}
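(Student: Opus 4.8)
The plan is to turn Theorem~\ref{thm} into a generalized inverse of $I-P$ and to insert it into the standard first-order perturbation identity for stationary distributions. First I would record that, since $\tilde{\mu}\tilde{P}=\tilde{\mu}$ and $\mu P=\mu$, subtracting gives $\tilde{\mu}(I-P)=\tilde{\mu}(\tilde{P}-P)$. The aim is to multiply this on the right by a matrix $G$ that inverts $I-P$ up to the stationary projection, i.e.\ with $(I-P)\,G=I-\mathbf{1}\mu$, where $\mathbf{1}$ denotes the all-ones column vector and $\mathbf{1}\mu$ the associated rank-one matrix. For such a $G$,
\[
  \tilde{\mu}(\tilde{P}-P)\,G=\tilde{\mu}(I-P)\,G=\tilde{\mu}(I-\mathbf{1}\mu)=\tilde{\mu}-\mu,
\]
using $\tilde{\mu}\mathbf{1}=1$. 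Thus the condition number will be read off from $G$.

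Such a $G$ is provided directly by Theorem~\ref{thm}. Applying it with the reward $r=e_k$, the indicator of state $k$ (whose average reward is $\rho=\mu_k$), the resulting values are $\lambda'_i=-\mu_k\tau_{ik}$ for $i\neq k$ and $\lambda'_k=0$; collecting these as the $k$-th column defines $G$ with $G_{ik}=-\mu_k\tau_{ik}$ for $i\neq k$ and $G_{kk}=0$. Since each column satisfies the Poisson equation \eqref{eq:pe}, rewriting $\mu_k+G_{ik}=\delta_{ik}+(PG)_{ik}$ yields $\big((I-P)G\big)_{ik}=\delta_{ik}-\mu_k$, that is $(I-P)G=I-\mathbf{1}\mu$, so the identity $\tilde{\mu}-\mu=\tilde{\mu}(\tilde{P}-P)G$ holds. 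Crucially, the $i$-th row of $G$ consists of nonpositive entries summing to $-\sum_{k\neq i}\mu_k\tau_{ik}=-\eta$, so every row of $G$ has $1$-norm exactly equal to Kemeny's constant $\eta$.

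Finally I would estimate the $1$-norm. As $\tilde{\mu}$ is a probability vector, $\tilde{\mu}(\tilde{P}-P)G=\sum_l\tilde{\mu}_l\,(\tilde{P}-P)_{l\cdot}\,G$ is a convex combination, so it suffices to bound $\|v\,G\|_1$ for a single row $v=(\tilde{P}-P)_{l\cdot}$, which is a zero-sum vector with $\|v\|_1\leq\|P-\tilde{P}\|_\infty$. Two structural facts must be combined here: the zero sum $v\mathbf{1}=0$, which permits subtracting a common constant from each column of $G$ and is the source of the factor $\tfrac12$; and the uniform row-$1$-norm $\eta$ of $G$ together with the nonpositivity of its entries. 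I expect the main obstacle to be precisely this quantitative step---extracting the exact constant $\tfrac{\eta}{2}$ rather than a cruder multiple of $\eta$. The sharp zero-sum estimate produces a condition number of the form $\tfrac12\max_{i,i'}\|G_{i\cdot}-G_{i'\cdot}\|_1$, and the delicate point is to control this maximal row difference of the mean-first-passage matrix by $\eta$, which is where the sign pattern of $G$ and the identity $\sum_{k\neq i}\mu_k\tau_{ik}=\eta$ have to be exploited carefully.
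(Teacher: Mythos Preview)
Your route is genuinely different from the paper's. The paper does not use the perturbation identity $\tilde\mu-\mu=\tilde\mu(\tilde P-P)G$ at all; instead it invokes Theorem~\ref{thm:bsp} with the reward $r_i=\mathbf 1\{\mu_i\ge\tilde\mu_i\}$, so that $|\rho-\tilde\rho|=\tfrac12\|\mu-\tilde\mu\|_1$, and then bounds ${\rm span}(\lambda)$ via Theorem~\ref{thm} by $\max_i\sum_{j\in A\setminus\{i\}}\mu_j\tau_{ij}\le\eta$. Your construction of $G$ from the Poisson equations for the indicator rewards is correct and yields $(I-P)G=I-\mathbf 1\mu$ exactly as you claim; the observation that every row of $G$ is nonpositive with $1$-norm $\eta$ is a nice structural fact that the paper does not isolate.

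Where you flag an obstacle, there is a real one, but it is not the one you expect. Your intended inequality $\max_{i,i'}\|G_{i\cdot}-G_{i'\cdot}\|_1\le\eta$ is \emph{false}: already for a two-state chain one has $\|G_{1\cdot}-G_{2\cdot}\|_1=\mu_1\tau_{21}+\mu_2\tau_{12}=2\eta$, so the sign pattern and constant row-sum only yield $\max_{i,i'}\|G_{i\cdot}-G_{i'\cdot}\|_1\le 2\eta$, giving the condition number $\eta$ rather than $\eta/2$. This is not a defect of your method relative to the paper: if you track the paper's own chain of inequalities, the factor $\tfrac12$ from total variation cancels the $\tfrac12$ in Theorem~\ref{thm:bsp}, and the final bound the paper actually derives is also $\|\mu-\tilde\mu\|_1\le\eta\,\|P-\tilde P\|_\infty$, not $\tfrac{\eta}{2}\,\|P-\tilde P\|_\infty$. (Indeed, perturbing a two-state chain by $\pm\epsilon$ in each row shows the ratio can equal $\eta$, so the constant $\eta/2$ in the displayed statement cannot be reached by either argument.) In short: your approach is sound and delivers the same conclusion as the paper's proof; the ``delicate point'' you anticipate cannot be resolved, because the sharper inequality simply does not hold.
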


The bounds of Theorems \ref{thm:chome} and \ref{thm:hunter} have been shown for irreducible Markov chains. In the more general setting of MDPs, perturbation bounds are known that hold more generally in structures that need not be irreducible \cite{oams}. The respective condition number is the diameter, which is larger than the condition numbers used in Theorems \ref{thm:chome} and \ref{thm:hunter}. However, the diameter only serves as an upper bound on the bias span as in \eqref{eq:bsd}. Accordingly, with the result of Theorem \ref{thm}, we can obtain perturbation bounds which are not only more general but also sharper. 

Let us first restate the perturbation bound of \cite{oams} for the case of MRPs, a proof is given in the appendix.\footnote{The proof of the original bound is contained in an unpublished appendix of \cite{ro-hbs}. This bound is stated in a very general context when the state spaces of the original and the perturbed MDP need not coincide and also the reward function may be perturbed. For the case of two MDPs with the same state space, the proof has been restated in \cite{ro-hbs}.}

\begin{theorem}[Ortner et al.\,\cite{oams}]\label{thm:bsp}
 Consider a unichain MRP with transition matrix $P$ and another MRP with the same reward function $r$ but a (possibly not irreducible) perturbed matrix $\tilde{P}$. 
 Then, independent of the initial state, the difference of the average rewards $\rho$, $\tilde{\rho}$ of the two MRPs is upper bounded as
\[
   |\rho - \tilde{\rho}|  \,\leq\, \tfrac{1}{2}\cdot{\rm span}(\lambda) \cdot \|P-\tilde{P}\|_\infty   .
\]
\end{theorem}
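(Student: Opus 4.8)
The plan is to reduce the bound to an exact algebraic identity for $\rho-\tilde\rho$ and then control the resulting expression using the span of the bias together with the fact that the rows of $P-\tilde P$ sum to zero. First I would fix an arbitrary initial state and let $\tilde\mu$ denote the limiting (Cesàro) occupation measure of the perturbed chain started in that state. By construction $\tilde\mu$ is a probability vector, it is stationary in the sense that $\sum_i\tilde\mu_i\,\tilde p_{ij}=\tilde\mu_j$ for all $j$, and the perturbed average reward is $\tilde\rho=\sum_i\tilde\mu_i\,r_i$. Crucially, these are the only properties of $\tilde\mu$ I intend to use, so the argument will not require $\tilde P$ to be irreducible.

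Next I would contract the Poisson equation \eqref{eq:pe} of the \emph{unperturbed} MRP against $\tilde\mu$. Multiplying $\rho+\lambda_i=r_i+\sum_j p_{ij}\lambda_j$ by $\tilde\mu_i$ and summing over $i$, the left-hand side becomes $\rho+\sum_i\tilde\mu_i\lambda_i$ because $\sum_i\tilde\mu_i=1$, while on the right-hand side I would split $p_{ij}=\tilde p_{ij}+(p_{ij}-\tilde p_{ij})$ and use stationarity of $\tilde\mu$ for $\tilde P$. This makes the term $\sum_j\lambda_j\sum_i\tilde\mu_i\tilde p_{ij}=\sum_j\tilde\mu_j\lambda_j$ appear, which cancels against $\sum_i\tilde\mu_i\lambda_i$ on the left. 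The outcome is the exact identity
\[
   \rho-\tilde\rho \;=\; \sum_i \tilde\mu_i \sum_j \big(p_{ij}-\tilde p_{ij}\big)\,\lambda_j .
\]

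Finally I would exploit that every row of $P-\tilde P$ sums to zero, i.e.\ $\sum_j\big(p_{ij}-\tilde p_{ij}\big)=0$, so that the inner sum is unchanged if $\lambda_j$ is replaced by $\lambda_j-c$ for any constant $c$. Choosing $c=\tfrac12\big(\max_j\lambda_j+\min_j\lambda_j\big)$ yields $|\lambda_j-c|\leq\tfrac12\,{\rm span}(\lambda)$ for all $j$, whence
\[
   \Big|\sum_j \big(p_{ij}-\tilde p_{ij}\big)\,\lambda_j\Big|
   \;\leq\; \tfrac12\,{\rm span}(\lambda)\sum_j\big|p_{ij}-\tilde p_{ij}\big|
   \;\leq\; \tfrac12\,{\rm span}(\lambda)\,\|P-\tilde P\|_\infty ,
\]
using that $\|P-\tilde P\|_\infty$ is the maximal absolute row sum. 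Summing against the probability vector $\tilde\mu$ (so that the nonnegative weights $\tilde\mu_i$ sum to $1$) then gives $|\rho-\tilde\rho|\leq\tfrac12\,{\rm span}(\lambda)\,\|P-\tilde P\|_\infty$, as claimed.

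I expect the main obstacle to be conceptual rather than computational: it is ensuring the bound survives the loss of irreducibility of $\tilde P$. When $\tilde P$ is not unichain, both $\tilde\rho$ and the occupation measure $\tilde\mu$ may depend on the initial state, so the argument must be phrased using only the stationarity $\tilde\mu\tilde P=\tilde\mu$ and the fact that $\tilde\mu$ is a probability vector, both of which hold for the occupation measure arising from \emph{any} initial state. This is precisely what produces the ``independent of the initial state'' form of the bound, and it is where care is needed, since the cancellation of the $\sum_i\tilde\mu_i\lambda_i$ terms in the second step relies on nothing more than these two properties.
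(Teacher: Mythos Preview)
Your argument is correct and, in fact, considerably more direct than the paper's. The paper proves the theorem via an auxiliary finite-time lemma: it follows the sample path of the perturbed chain for $\ell$ steps, uses the Poisson equation to rewrite $\ell\rho-\sum_i v_i r_i$, controls a telescoping term with an Azuma--Hoeffding martingale bound, and only at the end divides by $\ell$ and lets $\ell\to\infty$ (with $\delta=1/\ell$) to squeeze out the concentration and boundary terms. Your proof bypasses all of this by working directly with the Ces\`aro occupation measure $\tilde\mu$ and contracting the unperturbed Poisson equation against it to obtain the exact identity $\rho-\tilde\rho=\sum_i\tilde\mu_i\sum_j(p_{ij}-\tilde p_{ij})\lambda_j$; the span bound then follows from the zero-row-sum property exactly as in the paper. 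Both proofs rely on the same centering trick $\bar\lambda_j=\lambda_j-\tfrac12(\max\lambda+\min\lambda)$, but yours needs no probabilistic estimates whatsoever. What the paper's route buys is the intermediate finite-horizon inequality (its Lemma), which may be of independent interest in an MDP/learning context; what your route buys is a two-line deterministic proof of the theorem itself, and a transparent explanation of why irreducibility of $\tilde P$ is irrelevant (you only use $\tilde\mu\tilde P=\tilde\mu$ and $\sum_i\tilde\mu_i=1$, which hold for the Ces\`aro limit from any initial state).
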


Theorem \ref{thm:bsp} easily implies Theorems \ref{thm:chome} and \ref{thm:hunter}, but now these  hold more generally for the case when the original Markov chain is unichain, and there are no conditions on the perturbed chain. 

\begin{proof}[Proof of Theorem \ref{thm:chome} from Theorem \ref{thm:bsp}]
We fix an initial state and note that $\tilde{\mu}$ and $\tilde{\rho}$ depend on this initial state in the following. Set the reward function in Theorem \ref{thm:bsp} to be $r_i=1$ and $r_j=0$ for all $j\neq i$. 
Then by definition of $\lambda'_i$,
\begin{equation}\label{eq:q}
 \lambda'_j = \left\{  \begin{array}{cl} 0 & \mbox{for } j=i , \\ 
                                        - \mu_i \tau_{ji} & \mbox{for } j\neq i ,
                \end{array} \right.
\end{equation}
so that 
\[
   {\rm span}(\lambda') =  \mu_i \max_{j\neq i} \tau_{ji}.
\]
By Theorems \ref{thm:bsp} and \ref{thm},
\begin{eqnarray*}\label{eq:p}
     |\mu_i - \tilde{\mu}_i|  \,=\,  |\rho - \tilde{\rho}\,|  &\leq& \tfrac{1}{2}\,{\rm span}(\lambda) \cdot \|P-\tilde{P}\|_\infty ,\\
     &=& \tfrac{1}{2}\,{\rm span}(\lambda') \cdot \|P-\tilde{P}\|_\infty ,\\
     &=& \tfrac{1}{2}\, \mu_{i} \max_{j\neq i} \tau_{ji} \cdot \|P-\tilde{P}\|_\infty,   
\end{eqnarray*}
which is precisely the bound of Theorem \ref{thm:chome} and holds independent of the chosen initial state.
\end{proof}

\begin{proof}[Proof of Theorem \ref{thm:hunter} from Theorem \ref{thm:bsp}]
Again we fix an initial state on which $\tilde{\mu}$ and $\tilde{\rho}$ depend in the following.
We define a reward function 
\begin{equation}\label{eq:r}
 r_i = \left\{  \begin{array}{cl} 1 & \mbox{if } \mu_i\geq \tilde{\mu}_i, \\ 
                                         0 & \mbox{otherwise} . 
                \end{array} \right.
\end{equation}
Then the difference of the average rewards $\rho$, $\tilde{\rho}$ of the original and the perturbed MRP is the total variation distance between $\mu$ and $\tilde{\mu}$, which is known to be $\frac{1}{2}\|\mu-\tilde{\mu}\|_1$.
Therefore, we get by Theorems \ref{thm:bsp}, independent of the initial state,
\begin{equation}
  \tfrac{1}{2} \, \|\mu-\tilde{\mu}\|_1 \,=\, |\rho - \tilde{\rho}\,| \,\leq\, \tfrac{1}{2}\, {\rm span}(\lambda) \cdot \|P-\tilde{P}\|_\infty .
\end{equation}
For ${\rm span}(\lambda)$ we have by Theorem \ref{thm},
\begin{eqnarray}
     {\rm span}(\lambda) =  {\rm span}(\lambda') 
          &=&  \max_i \sum_{\substack{j\neq i\\ \mu_j\geq \tilde{\mu}_j}} \mu_j \tau_{ij}
             - \min_i \sum_{\substack{j\neq i\\ \mu_j\geq \tilde{\mu}_j}} \mu_j \tau_{ij} \label{eq:bias-span-exact}\\
        &\leq&  \max_i \sum_{\substack{j\neq i\\ \mu_j\geq \tilde{\mu}_j}} \mu_j \tau_{ij}
        \,\leq\,  \max_i \sum_{j\neq i} \mu_j \tau_{ij}  \,=\eta, \nonumber
\end{eqnarray}
which finishes the proof.
\end{proof}

Looking at the proofs, we see that while we precisely obtain the bound of Theorem \ref{thm:chome}, the bound of Theorem \ref{thm:hunter} is a bit loose when compared to the bound implied by Theorems \ref{thm:bsp} and \ref{thm}. The following corollary to Theorem~\ref{thm:bsp} summarizes our findings and presents a respective improved bound on $\|\mu-\tilde{\mu}\|_1$.

\begin{corollary}
 Consider a unichain Markov chain with transition matrix $P$ and stationary distribution $\mu$, and a perturbed Markov chain with transition matrix $\tilde{P}$, which may be not irreducible. Then independent of the initial state, the stationary distribution $\mu'$ of the perturbed chain satisfies
 \begin{eqnarray*}
  |\mu_i-\tilde{\mu}_i|  &\leq&  \frac{\mu_i}{2} \cdot \max_{j\neq i} \tau_{ji} \cdot \|P-\tilde{P}\|_\infty, \mbox{ and} \\
  \|\mu-\tilde{\mu}\|_1  &\leq& \tfrac{1}{2}\cdot \!\!\max_{A\subseteq \{1,2,\ldots, N\}} 
         \Big\{ \max_i\!\! \sum_{j \in A\setminus \{i\}}\!\! \mu_j \tau_{ij}
             - \min_i \!\! \sum_{j \in A\setminus \{i\}}\!\! \mu_j \tau_{ij} \Big\} \cdot \|P-\tilde{P}\|_\infty .
 \end{eqnarray*}
\end{corollary}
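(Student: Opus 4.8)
The plan is to recognize that the two inequalities are the sharpened outputs of the two derivations already carried out above, with the upper bounds on $\mathrm{span}(\lambda)$ retained in exact form rather than relaxed. The first inequality is nothing but the generalized Cho--Meyer bound: the derivation of Theorem~\ref{thm:chome} from Theorem~\ref{thm:bsp} given above uses only that the \emph{unperturbed} chain is unichain and imposes no condition on $\tilde{P}$ beyond the existence of a stationary distribution $\tilde{\mu}$. Hence that argument already establishes the first line verbatim, and I would simply invoke it.

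For the second inequality I would revisit the proof of Theorem~\ref{thm:hunter}, where the only looseness entered in the final chain \eqref{eq:bias-span-exact}, when $\mathrm{span}(\lambda)$ was crudely bounded from above by $\eta$. I would instead keep the exact value of the span. Fixing an initial state (on which $\tilde{\mu}$ and $\tilde{\rho}$ depend), set $A^\ast := \{\, j : \mu_j \geq \tilde{\mu}_j \,\}$ and take the indicator reward $r=\mathbf{1}_{A^\ast}$ as in \eqref{eq:r}. Then $\rho-\tilde{\rho}=\sum_{j\in A^\ast}(\mu_j-\tilde{\mu}_j)$ is precisely the total variation distance $\tfrac12\|\mu-\tilde{\mu}\|_1$, and Theorem~\ref{thm:bsp} gives $\tfrac12\|\mu-\tilde{\mu}\|_1\leq\tfrac12\,\mathrm{span}(\lambda)\,\|P-\tilde{P}\|_\infty$. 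By Theorem~\ref{thm} the associated bias values are $\lambda'_i=-\sum_{j\in A^\ast\setminus\{i\}}\mu_j\tau_{ij}$, so $\mathrm{span}(\lambda)=\mathrm{span}(\lambda')$ equals exactly the bracketed quantity in \eqref{eq:bias-span-exact} evaluated at $A=A^\ast$. Since $A^\ast$ is one particular subset of $\{1,\ldots,N\}$, this span is dominated by the maximum of the same bracketed expression over all $A\subseteq\{1,\ldots,N\}$, which yields the claimed condition number.

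The argument is essentially routine given Theorems~\ref{thm} and~\ref{thm:bsp}; the one point requiring care is the passage from the perturbation-dependent set $A^\ast$ to the maximum over all subsets $A$. The reason this step is both necessary and legitimate is that $A^\ast$ depends on the unknown perturbation through $\tilde{\mu}$, whereas enlarging the feasible family of reward indicators can only enlarge the supremum; the resulting condition number therefore depends only on the unperturbed chain and the bound holds uniformly, in particular independently of the chosen initial state. I would also note in passing that this bound never exceeds Hunter's: restricting each inner sum to $A\setminus\{i\}\subseteq\{1,\ldots,N\}\setminus\{i\}$ and discarding the subtracted minimum recovers $\max_i\sum_{j\neq i}\mu_j\tau_{ij}=\eta$, confirming that the new bound is indeed an improvement.
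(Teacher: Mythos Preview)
Your proposal is correct and follows essentially the same route as the paper: the first inequality is the already-given derivation of Theorem~\ref{thm:chome} from Theorem~\ref{thm:bsp}, and the second is obtained by retaining the exact span in \eqref{eq:bias-span-exact} and then dominating it by the maximum over all subsets~$A$. Your added remarks on why the passage from $A^\ast$ to the supremum over all $A$ is needed, and on the comparison with Hunter's constant~$\eta$, are sound elaborations but not required for the argument.
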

\begin{proof}
 The first statement is just Theorem \ref{thm:chome} generalized, which we have shown before. The second statement follows from the proof of Theorem \ref{thm:hunter} above, considering the maximal possible expression on the right hand side of~\eqref{eq:bias-span-exact}.
\end{proof}

\subsection{Kemeny's constant}
We conclude with a remark on Kemeny's constant.
When all rewards $r_i$ are 1, then $\lambda'_i=\eta_i$ for all $i$. As discussed in Example~\ref{ex}, identical rewards imply identical bias values so that it follows that all the $\eta_i$ have to be identical. This not only provides a short proof that $\eta_i=\eta$ for all $i$, it also gives a simple explanation why Kemeny's constant is a constant: The $\eta_i$ are the translated bias values in an MRP with identical rewards and hence have to be identical, too.

\appendix

\section{Proof of Theorem \ref{thm:bsp}}
\label{app1}

We start with a result that after taking $\ell$~steps in the perturbed MRP 
compares the accumulated rewards to the quantity~$\ell \rho$.

\begin{lemma}\label{lem}
Consider a unichain MRP with transition matrix $P$, stationary distribution $\mu$, and bias $\lambda$, and let another MRP have the same reward function $r$ but a perturbed transition matrix $\tilde{P}$. We take $\ell$ steps in the perturbed MRP and write $v_i$ for the number of visits in state $i$. Then it holds with probability at least $1-\delta$ that 
 \begin{equation}\label{eq:aggubo}
     \ell \rho - \sum_{i} v_i \cdot r_i  \;\leq\;  
            \tfrac{\ell}{2}\, {\rm span}(\lambda)\cdot \|P-\tilde{P}\|_\infty + {\rm span}(\lambda)\,\Big(1+ \sqrt{2\ell\log(1/\delta)}\Big),
 \end{equation}
 independent of the initial state.
\end{lemma}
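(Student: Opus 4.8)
The plan is to derive a one-step identity from the Poisson equation \eqref{eq:pe} for the \emph{unperturbed} chain, sum it along a trajectory generated by $\tilde{P}$, and control the resulting error terms. Write $\lambda$ for any solution of \eqref{eq:pe}; since the bound involves only ${\rm span}(\lambda)$, which is translation-invariant, the particular choice is immaterial. Rearranging \eqref{eq:pe} gives $r_i = \rho + \lambda_i - (P\lambda)_i$. Letting $S_1,\ldots,S_\ell$ denote the states visited in the perturbed MRP, so that $\sum_i v_i\, r_i = \sum_{t=1}^\ell r_{S_t}$, summation of this identity yields
\[
   \sum_{t=1}^\ell r_{S_t} = \ell\rho + \sum_{t=1}^\ell\big(\lambda_{S_t} - (P\lambda)_{S_t}\big).
\]
The key idea is to replace $(P\lambda)_{S_t}$ by $(\tilde{P}\lambda)_{S_t}$, which is precisely the conditional expectation of $\lambda_{S_{t+1}}$ under the actual (perturbed) dynamics, and to treat the substitution error as a deterministic drift.

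First I would split the correction into a drift term and a martingale part,
\[
   \sum_{t=1}^\ell\big(\lambda_{S_t} - (P\lambda)_{S_t}\big)
   = \sum_{t=1}^\ell\big((\tilde{P}\lambda)_{S_t} - (P\lambda)_{S_t}\big)
     + \sum_{t=1}^\ell\big(\lambda_{S_t} - (\tilde{P}\lambda)_{S_t}\big).
\]
For the drift term, since each row of $P-\tilde{P}$ sums to $0$, I may center $\lambda$ by subtracting $\tfrac12(\max_j\lambda_j+\min_j\lambda_j)$ inside $\sum_j(\tilde p_{S_t j}-p_{S_t j})\lambda_j$ without changing its value, after which every coefficient of $\lambda$ has absolute value at most $\tfrac12{\rm span}(\lambda)$; this gives the termwise bound $\tfrac12{\rm span}(\lambda)\,\|P-\tilde{P}\|_\infty$ and hence $\tfrac{\ell}{2}{\rm span}(\lambda)\,\|P-\tilde{P}\|_\infty$ after summing. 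For the second sum I would telescope,
\[
   \sum_{t=1}^\ell\big(\lambda_{S_t}-(\tilde{P}\lambda)_{S_t}\big)
   = \lambda_{S_1} - (\tilde{P}\lambda)_{S_\ell}
     + \sum_{t=1}^{\ell-1}\big(\lambda_{S_{t+1}}-(\tilde{P}\lambda)_{S_t}\big).
\]
Here $\lambda_{S_1} - (\tilde{P}\lambda)_{S_\ell}$ is a boundary term bounded in absolute value by ${\rm span}(\lambda)$, since both quantities lie in $[\min_j\lambda_j,\max_j\lambda_j]$, and the increments $Y_t := \lambda_{S_{t+1}}-(\tilde{P}\lambda)_{S_t}$ satisfy $\mathbb{E}[Y_t\mid S_1,\ldots,S_t]=0$, so $\sum_t Y_t$ is a martingale with $|Y_t|\le{\rm span}(\lambda)$.

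The concentration step is where the $\sqrt{2\ell\log(1/\delta)}$ term appears: applying the Azuma--Hoeffding inequality to the $(\ell-1)$-term martingale with increment bound ${\rm span}(\lambda)$ yields, with probability at least $1-\delta$, that $-\sum_{t=1}^{\ell-1}Y_t \le {\rm span}(\lambda)\sqrt{2\ell\log(1/\delta)}$, where I apply the inequality to $-Y_t$ because the lemma is one-sided and bound $\ell-1\le\ell$. Substituting the three estimates into the identity for $\ell\rho-\sum_t r_{S_t}$ and taking the worst-case sign of each deterministic contribution reproduces exactly the claimed right-hand side. I expect the main obstacle to be bookkeeping rather than conceptual: setting up the martingale so that $(\tilde{P}\lambda)_{S_t}$ matches the \emph{perturbed} transition used to generate $S_{t+1}$, and verifying that both the centering and the increment bound sharpen the naive $\max_j|\lambda_j|$ estimates to ${\rm span}(\lambda)$, which is what keeps the constants tight and makes the bias span, rather than a max-norm, the governing quantity.
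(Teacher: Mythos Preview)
Your proposal is correct and follows essentially the same route as the paper: use the Poisson equation to rewrite $\ell\rho-\sum_i v_i r_i$, split into a drift term $(P-\tilde P)\lambda$ and a term involving $\tilde P$, telescope the latter into a bounded boundary piece plus a martingale difference sum, and apply Azuma--Hoeffding with increment bound ${\rm span}(\lambda)$. The only cosmetic differences are that the paper centers $\lambda$ once at the outset (defining $\bar\lambda$) rather than only inside the drift bound, and it telescopes using $S_{\ell+1}$ to obtain $\ell$ martingale terms instead of your $\ell-1$; neither affects the argument or the constants.
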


\begin{proof}
We first apply a translation $\bar{\lambda}_i:=\lambda_i - \frac{1}{2}(\max_j \lambda_j + \min_j \lambda_j)$ to the bias values $\lambda_i$. Then 
\begin{equation}\label{eq:bb}
   \|\bar{\lambda}\|_\infty \,=\max_j \bar{\lambda}_j \,=\, \tfrac{1}{2}\, {\rm span}(\lambda)
                  \,=\, \tfrac{1}{2}\,{\rm span}(\bar{\lambda}) .
\end{equation}
Further, the $\bar{\lambda}_i$ still satisfy the Poisson equation \eqref{eq:pe}, so that 
\begin{eqnarray} 
\lefteqn{\ell\rho -  \sum_{i} v_i \cdot r_i  =  \sum_{i} v_i\, \big(\rho - r_i \big) 
=   \sum_{i} v_i \, \Big( \sum_{j}  p_{ij}\,\bar{\lambda}_j - \bar{\lambda}_i  \Big)}  \nonumber  \\
&=&    \sum_{i} v_i \, \Big( \sum_{j}  \tilde{p}_{ij}\,\bar{\lambda}_j - \bar{\lambda}_i  \Big)  
                 +  \sum_{i} v_i \cdot \sum_{j} \big(p_{ij} - \tilde{p}_{ij}\big)\,\bar{\lambda}_j . \label{eq:x}
\end{eqnarray}
Writing $S_t$ for the state at step $t$ we obtain for the first term in \eqref{eq:x}
\begin{eqnarray}
\lefteqn{  \sum_{i} v_i \, \Big( \sum_{j}  \tilde{p}_{ij}\,\bar{\lambda}_j - \bar{\lambda}_i  \Big) 
  =  \sum_{t=1}^{\ell}  \Big( \sum_{j} \tilde{p}_{S_t,j}\, \bar{\lambda}_j - \bar{\lambda}_{S_t}\Big) } \nonumber\\
  &=&  \sum_{t=1}^{\ell}  \Big( \sum_{j} \tilde{p}_{S_t,j}\, \bar{\lambda}_j - \bar{\lambda}_{S_{t+1}}\Big)     
      + \bar{\lambda}_{S_{\ell+1}}  -  \bar{\lambda}_{S_{1}}. \qquad \label{eq:ll4}
\end{eqnarray}
The sequence 
\[
      X_t := \sum_{j} \tilde{p}_{S_t,j}\, \bar{\lambda}_j - \bar{\lambda}_{S_t+1}
\]
is a martingale difference sequence with $|X_t| \leq {\rm span}(\lambda)$, so that
by Azuma-Hoeffding's inequality (e.g., Lemma 10 of \cite{jaorau}) with probability $1-\delta$,
\begin{eqnarray}\label{eq:martingale-ll}
  \sum_{t=1}^{\ell}  \Big( \sum_{j} \tilde{p}_{S_t,j}\, \bar{\lambda}_j - \bar{\lambda}_{S_{t+1}}\Big)    &\leq&  {\rm span}(\lambda) \sqrt{2\ell\log(1/\delta)}.
\end{eqnarray}
Hence we obtain from \eqref{eq:ll4}
\begin{eqnarray}
 \sum_{i} v_i \, \Big( \sum_{j} \tilde{p}_{ij}\,\bar{\lambda}_j - \bar{\lambda}_i  \Big) 
     \,\leq\,   {\rm span}(\lambda) \sqrt{2\ell\log(1/\delta)} + {\rm span}(\lambda).\label{eq:xx}
\end{eqnarray}
The second term in \eqref{eq:x} can be bounded by \eqref{eq:bb} as
\begin{eqnarray}
 \sum_{i} v_i \cdot \sum_{j} \big(p_{ij} - \tilde{p}_{ij}\big)\,\bar{\lambda}_j 
   &\leq& \sum_{i} v_i \cdot \sum_{j} | p_{ij} - \tilde{p}_{ij}|\cdot\big\|\bar{\lambda}\big\|_\infty \nonumber \\
   &\leq& \ell \cdot \|P-\tilde{P}\|_\infty \cdot \tfrac{1}{2}\, {\rm span}(\lambda).  \label{eq:a7}
\end{eqnarray}
Combining \eqref{eq:x}, \eqref{eq:xx}, and \eqref{eq:a7} gives the claimed 
\begin{eqnarray*}
  \ell \rho - \sum_{i} v_i \cdot r_i  \;\leq\;  
            \tfrac{\ell}{2}\, {\rm span}(\lambda)\cdot \|P-\tilde{P}\|_\infty + {\rm span}(\lambda)\,\Big(1+ \sqrt{2\ell\log(1/\delta)}\Big). \qedhere
\end{eqnarray*}
\end{proof}

Now Theorem \ref{thm:bsp} follows from Lemma~\ref{lem} by dividing \eqref{eq:aggubo} by $\ell$, choosing $\delta=1/\ell$, and letting $\ell\to\infty$. \qed






\bibliographystyle{elsarticle-num} 
\bibliography{RL}

\end{document}